\newtheorem{lemme}{Lemma}[section]
\newtheorem{corollaire}{Corollary}[section]
\newtheorem{proposition}{Proposition}[section]
\newtheorem{definition}{Definition}[section]
\newtheorem{remarque}{Remark}[section]
\newtheorem{theorem}{Theorem}[section]
\def\A{\mathcal{A}}
\def\B{\mathcal{B}}
\def\E{\mathcal{E}}
\title{A differential graded Lie algebra approach to non abelian extensions of associative algebras}
\author{Jean-Baptiste Gouray}
\date{}							
\begin{document}

\maketitle

\begin{abstract}
	In this paper we show that non abelian extensions of an associative algebra $\B$ by an associative algebra $\A$ can be viewed as Maurer-Cartan elements of a suitable differential graded Lie algebra $L$. In particular we show that $\mathcal{MC}(L)$, the Deligne groupoid of $L$, is in 1-1 correspondence with the non-abelian cohomology $H^2_{nab}(\B,\A)$.
\end{abstract}

\tableofcontents
	\addcontentsline{toc}{section}{Introduction}

\section*{Introduction}

Many classification problems can be reduced to some cohomology computations. One can cite derivations in terms of $H^1(A,A)$ (\cite{MR1438546} and \cite{MR1269324}), abelian extensions in terms of $H^2(A,M)$ (\cite{MR1438546} and \cite{MR1269324}), deformations in terms of $H^2(A,M)$ (\cite{MR2381782} and \cite{MR0024908} for associative and Lie), crossed modules in terms of $H^3(A,B)$ (\cite{2009arXiv0911.2861T} and \cite{MR2229486} for groups and Lie algebras), and so on. 

Among them, non-abelian extensions play a special rôle in the sense that the algebraic structure that governs them is not  exactly  a cohomology theory as it does not come from a complex. The first occurrence of such a study appeared in the setting of Lie algebras in \cite{MR3123761}. It is only recently  that the associative case has been studied in \cite{MR3545266}, building on \cite{MR0022842}.
Yaël Frégier has conjectured that similar theories should exist for most algebraic structures and suggested an approach to unify such a treatment, based on the use of differential graded Lie algebras (dgLa's).\\


 In this paper we propose to investigate the associative case using this dgLa approach suggested by Frégier. The main theorem \ref{thmprinc} states that
\[
 	H^2_{nab}(\B,\A) \simeq \mathcal{MC}(L).
\]
Here $\B$ is a given associative algebra, $\A$ is an associative algebra with which we want to extend $\B$ and $L$ is a dgLa. It is actually a sub dgLa of $(C^{\bullet+1}(\A \oplus \B,\A \oplus \B),[\,,\,],\delta)$ where $(C(\A \oplus \B,\A \oplus \B),\delta)$ is the Hochschild cohomology and $[\, , \,]$ is the Gerstenhaber bracket.\\

We  begin by recalling the results of \cite{MR3545266}, i.e. the classification of non-abelian extensions in terms of  non-abelian cohomology. We present the notions useful for the theorem $\ref{thmprinc}$ namely Deligne groupoids, Hochschild cohomology and Gerstenhaber bracket. In the second part, we show how non-abelian cohomology appears naturally in a dgLa context and we prove the main theorem \ref{thmprinc}. Finally, in the last part, we make the link with abelian extensions, we will see that it is a particular case of non-abelian extension.\\

\section{Background and useful notions}	
\subsection{Non abelian extensions and cohomology}

In this subsection we recall the notion defined in \cite{MR3545266}.

\begin{definition}
	Let $\A$ and $\B$ two associative algebras. An n\textbf{on abelian extension} $\E$ of $\B$ by $\A$ is a short exact sequence of the form (in the category of associative algebras)
	$$\begin{CD}
		0 @>>> \A @>>> \E @>>> \B @>>> 0.
	\end{CD}$$
\end{definition}

\vspace{1\baselineskip}
As explained in \cite{MR3545266}, in the framework of \textit{GE-problem} one is interested in extensions modulo the following equivalence relation. 

\begin{definition}
	Let $\E$ and $\E'$ two extensions of $\B$ by $\A$. They are called \textbf{equivalent} if there exists $\theta : \E \rightarrow \E'$ such that the following diagram commutes
	$$\begin{CD}
	0 @>>> \A @>>> \E @>>> \B @>>> 0 \\
	   @.	         @|	     @V\theta VV @|\\
	0 @>>> \A @>>> \E' @>>> \B @>>> 0.
	\end{CD}$$
\end{definition}

Such extensions are classified by non abelian cohomology. 

\begin{definition}\label{defcoho}
	 A \textbf{non abelian 2-cocycle} on $\B$ with values in $\A$ is a triplet $(\varphi, \psi, \chi )$ of linear maps $\chi : \B \otimes \B \rightarrow \A$ and $\varphi, \psi : \B \rightarrow End(\A)$ satisfying the following properties :
	 
	 \begin{eqnarray}
		\varphi_{b_1}(\varphi_{b_2}(a)) & = &\varphi_{b_1\cdot b_2}(a) + \chi(b_1,b_2) \cdot a, \label{defg}\\
		 \psi_{b_1}(\psi_{b_2}(a)) & = &\psi_{b_2\cdot b_1}(a) + a \cdot \chi(b_1,b_2),\label{defd}\\
		 \varphi_{b_1}(\psi_{b_2}(a)) & =&\psi_{b_2}(\varphi_{b_1}(a)),\label{defbi}
	\end{eqnarray}
	
	\begin{equation}
		\psi - \varphi : \B \rightarrow Der(\A), \label{defder}
	\end{equation}
and
	\begin{equation}
		 - \varphi_{b_1}(\chi(b_2,b_3))+ \chi(b_1\cdot b_2,b_3)-\chi(b_1,b_2\cdot b_3)+\psi_{b_3}(\chi(b_1,b_2)) =0. \label{defcocy}
	\end{equation}
	
One denotes by $Z^2_{nab}(\B,\A)$ the set of theses cocycles.\\
Moreover, $(\varphi, \psi, \chi)$ and $(\varphi', \psi', \chi')$ are said to be \textbf{equivalent} if there exists $\beta : B \rightarrow A$ satisfying:

	\begin{eqnarray}
		\varphi_{b}(a) &=& \varphi_b'(a) - \beta(b)\cdot a \label{def1}\\
		\psi_{b}(a) &=& \psi_{b}'(a) - a\cdot \beta(b), \label{def2}
	\end{eqnarray}
and
	\begin{equation}
		\chi'(b_1,b_2) = \chi(b_1,b_2) - \varphi_{b_1}(\beta(b_2)) - \psi_{b_2}(\beta(b)) + \beta(b_1 \cdot b_2) + \beta(b_1) \cdot \beta(b_2). \label{def3}
	\end{equation}
	
Non abelian cohomology $H^2_{nab}(\B,\A)$  is the quotient of $Z^2_{nab}(\B,\A)$ by this equivalence relation.
\end{definition}

\begin{remarque} \label{remder}
	The three conditions given in \cite{MR3545266}
	\[
	\begin{aligned}
		\psi_{b}(a_1\cdot a_2) &= a_1\cdot \psi_{b}(a_2),\\
		\varphi_{b}(a_1\cdot a_2) &= \varphi_{b}(a_1)\cdot a_2, \\
		\psi_{b}(a_1)\cdot a_2 &= a_1\cdot \varphi_{b}(a_2).
	\end{aligned}
	\] were replace by the condition $\ref{defder}$.
	Theses conditions can be interpreted as the compatibility of $\varphi$ and $\psi$ with the multiplication in $\A$.
\end{remarque}

\begin{proposition} \label{prop1}
	There is a 1-1 correspondence between classes of extensions of $\B$ by $\A$ and elements of $H^2_{nab}(\B,\A)$. In other words, $H^2_{nab}(\B,\A)$  classifies extensions of $\B$ by $\A$.
\end{proposition}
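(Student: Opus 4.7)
The plan is to show the correspondence in both directions by choosing a linear section of the surjection $\E \to \B$, which turns $\E$ into $\A \oplus \B$ as vector spaces and transports the product to an explicit form parametrized by a triple $(\varphi,\psi,\chi)$.

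First, given an extension $0 \to \A \to \E \to \B \to 0$, I would choose a linear (but not necessarily multiplicative) section $s : \B \to \E$. Identifying $\E$ with $\A \oplus \B$ via $a + s(b) \leftrightarrow (a,b)$, define
\[
\varphi_b(a) = s(b)\cdot a,\quad \psi_b(a) = a \cdot s(b),\quad \chi(b_1,b_2) = s(b_1)\cdot s(b_2) - s(b_1\cdot b_2),
\]
where the products live in $\E$ and the outputs land in $\A$ because $\A$ is an ideal and the kernel of the projection respectively. Writing out the induced product on $\A \oplus \B$ and then expanding associativity $((a_1,b_1)(a_2,b_2))(a_3,b_3) = (a_1,b_1)((a_2,b_2)(a_3,b_3))$ in all its components, the $\B$-component is trivially associative, and the $\A$-component splits into five terms according to which of the inputs are in $\A$ or in $\B$. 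Each case yields one of the relations in Definition \ref{defcoho}: the pure $(\B,\B,\A)$ case gives (\ref{defg}), the pure $(\A,\B,\B)$ case gives (\ref{defd}), the $(\B,\A,\B)$ case gives (\ref{defbi}), the cases involving two elements of $\A$ and one of $\B$ give (\ref{defder}), and the fully $\B$-valued case gives the cocycle condition (\ref{defcocy}). This yields a well-defined map from extensions to $Z^2_{nab}(\B,\A)$.

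Next, I would analyze dependence on the section. A second section differs from $s$ by a linear map $\beta : \B \to \A$, i.e.\ $s'(b) = s(b) + \beta(b)$. Substituting into the formulas above and expanding yields exactly relations (\ref{def1})--(\ref{def3}), so the cohomology class in $H^2_{nab}(\B,\A)$ depends only on the extension. Equivalent extensions in the sense of the commuting diagram yield the same class: an isomorphism $\theta$ transports a section of the first extension to a section of the second, and the induced cocycles coincide since $\theta$ restricts to the identity on $\A$ and induces the identity on $\B$.

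For the converse, given a cocycle $(\varphi,\psi,\chi)$, I would define on the vector space $\A \oplus \B$ the product
\[
(a_1,b_1)\cdot(a_2,b_2) = \bigl(a_1 a_2 + \varphi_{b_1}(a_2) + \psi_{b_2}(a_1) + \chi(b_1,b_2),\ b_1\cdot b_2\bigr),
\]
and check that the cocycle relations imply associativity (this is the same calculation as in the forward direction, read in reverse). The obvious inclusion and projection produce a short exact sequence, hence an extension. Equivalent cocycles produce equivalent extensions via the isomorphism $(a,b) \mapsto (a + \beta(b), b)$, and the two constructions are mutually inverse up to the respective equivalences. The main obstacle is the bookkeeping in splitting the associativity identity into its components and matching the five resulting scalar identities to (\ref{defg})--(\ref{defcocy}); in particular, the three auxiliary compatibilities recorded in Remark \ref{remder} must be shown to combine into the single derivation condition (\ref{defder}), which is where the formulation of Definition \ref{defcoho} deviates from \cite{MR3545266}.
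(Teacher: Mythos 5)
Your proposal is correct and follows essentially the same route as the paper: the same section-dependent formulas for $(\varphi,\psi,\chi)$, the same verification of the cocycle identities by splitting the associator of the transported product into components, the same change-of-section analysis producing (\ref{def1})--(\ref{def3}), and the same explicit multiplication on $\A\oplus\B$ for the converse direction. The only (harmless) variation is that you treat equivalence of extensions by transporting a section along $\theta$ and invoking section-independence, whereas the paper computes directly with $\beta=\theta^{-1}\circ s'-s$.
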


\begin{proof}
This is already shown in \cite{MR3545266}, but we propose here to give the details of the proof in order to draw a picture of the theorical background.\\
First, to a given extension $\E$ one associates a class in $H^2_{nab}(\B,\A)$ through the choice of a section. One recall that a section of $\E$ is a map $s : \B \rightarrow \E$ in 
	$$\begin{CD}
		0 @>>> \A @>>> \E @>p>> \B @>>> 0 
	\end{CD}$$
such that $ p \circ s = id_{\B}$.\\

A representative cocycle $(\varphi^s, \psi^s, \chi^s)$ is defined by :
	$$\varphi^s_b(a) := s(b)\cdot a,$$
	$$\psi^s_b(a) := a \cdot s(b),$$
and, $$ \chi^s(b_1,b_2) := s(b_1)\cdot s(b_2) - s(b_1 \cdot b_2).$$
First, one can show that $(\varphi^s, \psi^s, \chi^s)$ is, indeed, a cocycle.
\[
\begin{aligned}
	\varphi^s_{b_1}(\varphi^s_{b_2}(a)) &= s(b_1)\cdot (s(b_2) \cdot a)\\
								 &= (s(b_1) \cdot s(b_2))\cdot a - s(b_1 \cdot b_2)\cdot a + s(b_1 \cdot b_2)\cdot a\\
								 &= \chi^s(b_1,b_2) \cdot a + \varphi^s_{b_1\cdot b_2}(a),
\end{aligned}
\]
and similarly with $\psi^s$. Since $(s(b_1) \cdot a)\cdot s(b_2) = s(b_1)\cdot (a\cdot s(b_2))$ condition ($\ref{defbi}$) is satisfied too. Also, using associativity in $\E$ one can show that $\chi^s$ verifies condition ($\ref{defcocy}$).\\

It turns out that the equivalence relation on $Z^2_{nab}(\B,\A)$ makes the class independent of the choice of the section. Indeed, if one has two sections $s$ and $s'$, one defines $\beta : \B \rightarrow \A $ by $\beta := s - s' $. Then one checks that $(\varphi^{s'}, \psi^{s'}, \chi^{s'}) \overset{ \beta}{\sim}  (\varphi^s, \psi^s, \chi^s)$ :
$$
\begin{aligned}
    \varphi^{s'}_b(a) & = s'(b) \cdot a\\
         		   & = s(b) \cdot a - (s-s')(b) \cdot a \\
		   	   & = \varphi^s_b(a) - \beta(b) \cdot a.
 \end{aligned}
$$
Similar computations lead to equations ($\ref{def2}$) and ($\ref{def3}$).\\
Moreover, two equivalent extensions $\E$, $\E'$ rise to equivalent cocycles in $H^2_{nab}(\B,\A)$. Indeed, one has 
$$\begin{CD}
	0 @>>> \A @>>> \E @>>> \B @>>> 0 \\
	   @.	         @|	     @V\theta VV @|\\
	0 @>>> \A @>>> \E' @>>> \B @>>> 0 
\end{CD}$$
with sections $s : \B \rightarrow \E$ and $s' : \B \rightarrow \E'$. Consider $\beta := \theta^{-1}\circ s' -s $, then one has  $(\varphi^{s'}, \psi^{s'}, \chi^{s'}) \overset{ \beta}{\sim}  (\varphi^s, \psi^s, \chi^s)$. Indeed since the previous diagram commutes, one has that $\theta^{-1}(a) = a$ and $\theta^{-1}(e_1 \cdot_{\E'} e_2) = \theta^{-1}(e_1) \cdot_{\E} \theta^{-1}(e_2)$, hence we have
\[
\begin{aligned}
	\varphi^{s'}_b(a) &=  s'(b) \cdot_{\E'} a\\
				  &= \theta^{-1}(s'(b))\cdot_{\E} a - s(b)\cdot_{\E}a + s(b)\cdot_{\E}a\\
				  &= \beta(b) \cdot_{\E} a + \varphi^{s}_b(a),
\end{aligned}
\]
and similarly for $\psi^s$ and $\chi^s$.\\

Conversely, to a given cocycle $(\varphi, \psi, \chi)$, then one can associate it to an extension of the form 
$$
	\begin{CD}
		0 @>>> \A @>>> \A \oplus \B_{(\varphi, \psi, \chi)} @>>> \B @>>> 0 
	\end{CD}
$$
with multiplication in $\A \oplus \B_{(\varphi, \psi, \chi)}$ defined by 
$$
	m_{\E}(a_1+b_1,a_2+b_2) := m_{\A}(a_1 , a_2 )+ \varphi_{b_1}(a_2) + \psi_{b_2}(a_1) + \chi(b_1,b_2) + m_{\B}(b_1, b_2)
$$
This association is well defined in cohomology since equivalent cocycles give equivalent extensions : 
$$\begin{CD}
	0 @>>> \A @>>> \A \oplus \B_{(\varphi, \psi, \chi)} @>>> \B @>>> 0 \\
	   @.	         @|	     @V\theta VV @|\\
	0 @>>> \A @>>> \A \oplus \B_{(\varphi', \psi', \chi')}@>>> \B @>>> 0.
\end{CD}$$
\end{proof}

\begin{remarque}
	The previous defined multiplication is associative. 
	$$
	\begin{aligned}
		((a_1+b_1)\cdot (a_2+b_2 ))\cdot(a_3+b_3) &= (a_1 a_2 + \varphi_{b_1}(a_2) + \psi_{b_2}(a_1) + \chi(b_1,b_2) + b_1 b_2)\cdot(a_3+b_3)\\	
										   & = (a_1 \cdot a_2 ) \cdot a_3   + \varphi_{b_1}(a_2) \cdot a_3 + \psi_{b_2}(a_1)\cdot a_3  + \chi(b_1,b_2)\cdot a_3 + \varphi_{b_1\cdot b_2}(a_3)\\
											    &+  \psi_{b_3}(a_1 \cdot a_2 )+ \psi_{b_3}(\varphi_{b_1}(a_2)) + \psi_{b_3}(\psi_{b_2}(a_1)) + \psi_{b_3}(\chi(b_1,b_2))\\
											    &+ \chi(b_1 \cdot b_2, b_3) + (b_1 \cdot b_2)\cdot b_3.	
	\end{aligned}
	$$
	Similarly, 
	$$
	\begin{aligned}
	-(a_1+b_1)\cdot( (a_2+b_2 )\cdot(a_3+b_3)) & = - a_1 \cdot (a_2 \cdot a_3 )  - a_1\cdot \varphi_{b_2}(a_3) - a_1\cdot \psi_{b_3}(a_2)  - a_1\cdot \chi(b_2,b_3) - \psi_{b_2\cdot b_3}(a_1)\\
											    &-  \varphi_{b_1}(a_2 \cdot a_3 )- \varphi_{b_1}(\varphi_{b_2}(a_3)) - \varphi_{b_1}(\psi_{b_3}(a_2)) - \varphi_{b_1}(\chi(b_2,b_3))\\
											    &- \chi(b_1,b_2 \cdot b_3) - b_1 \cdot (b_2 \cdot b_3). 
	\end{aligned}
	$$
Thanks to equations (\ref{defg}),(\ref{defd}),(\ref{defbi}) and (\ref{defcocy}), one has 
\begin{eqnarray*}
 &&((a_1+b_1)\cdot(a_2+b_2) )\cdot(a_3+b_3) - (a_1+b_1)\cdot( (a_2+b_2) \cdot(a_3+b_3)) \\
  &=& \varphi_{b_1}(a_2)\cdot a_3 + \psi_{b_2}(a_1) \cdot a_3  +  \psi_{b_3}(a_1 \cdot a_2 ) -  \varphi_{b_1}(a_2 \cdot a_3 ) - a_1 \cdot \varphi_{b_2}(a_3) - a_1\cdot \psi_{b_3}(a_2).
\end{eqnarray*}
Thanks to remark \ref{remder}, condition (\ref{defder}) gives that this multiplication is associative.
\end{remarque}

\subsection{Maurer-Cartan elements and Deligne groupoid}

In this part we recall notion of Maurer-Cartan elements of differential graded Lie algebra (more details can be found in \cite{MR0195995} for example).

\begin{definition}
	A \textbf{differential graded Lie algebra} (dgLa) is a graded Lie algebra $(L,[ \,, \,])$ equipped a derivation $d$ such that
	\begin{enumerate}
		\item{$|da| = |a| +1 $(d is degree 1)}
		\item{$d[a,b] = [da,b] + (-1)^{|a|}[a,db]$ (d is a derivation)} 
		\item{$d^2=0$ (d is homological).}
	\end{enumerate}
\end{definition}

Then we can define the set of Maurer-Cartan elements.

\begin{definition}
	The set of \textbf{Maurer-Cartan} of the dgLa L is
	$$
		MC(L) = \{ c \in L^1 \; \; | \; \; dc + \dfrac{1}{2}[c,c] = 0 \}.
	$$
\end{definition}

We define an equivalence relation on $MC(L)$ which is called gauge equivalence relation.
\begin{definition}
	Let $c$ and $c'$  be two elements in $MC(L)$, they are \textbf{equivalent} modulo gauge equivalence relation if there exists $\beta \in L^0 $ ad-nilpotent (i.e. $\forall x \in  L, \; \exists n \in \mathbb{N} \; (ad_{\beta})^n(x) = [\beta ,[\beta ,[...,[\beta,x]...] = 0$) such that :
	$$
		c' = exp(ad_{\beta})c	+ g_{\beta}
	$$
	where 
	$$
		exp(ad_{\beta}):= \sum_{n = 0}^{\infty}\dfrac{1}{n!}(ad_{\beta})^n,
	$$
	and,
	$$
		g_{\beta} := - \sum_{n = 0}^{\infty}\dfrac{1}{(n+1)!}(ad_{\beta})^n d\beta.
	$$
\end{definition}

The \textbf{Deligne groupoid} of $L$ is defined as  $MC(L)$ modulo gauge equivalence relation, and we denote it by $\mathcal{MC}(L)$.

\subsection{Hochschild cohomology and Gerstenhaber bracket}

The dgLa used in theorem \ref{thmprinc} needs the following notions.

\begin{definition}[Hochschild cohomology]
	Let $\A$ an associative algebra and $M$ an $\A$-bimodule, then the \textbf{Hochschild cochain} $C(\A, M)$ is the space of the multilinear maps from $\A$ to $M$:
	$$
		C^n(\A,M) := Lin(\A^{\otimes n  },M),
	$$
	with the Hochschild differential, 
	\begin{eqnarray*}
		\delta f (a_1,...,a_{n+1}) &:=& a_1\cdot f(a_2,...,a_{n+1}) + \sum_{i=1}^{n} (-1)^i f(a_1,...,a_ia_{i+1},...,a_{n+1}) \\
							&&+ (-1)^{n+1}f(a_1,...,a_n)\cdot a_{n+1}.
	\end{eqnarray*} 
\end{definition}

\begin{remarque}
With this definition, equation ($\ref{defcocy}$) rewrite $-\delta \chi = 0$, but only if $\varphi$ and $\psi$ define a structure of $\B$-bimodule on $\A$. Note that equations (\ref{defg}),(\ref{defd}), and (\ref{defbi})) give almost a structure of $\B$-bimodule on $\A$. Also, equation ($\ref{def3}$) expresses that $\chi'$ and $\chi$ differ by a Hochschild coboundary.
\end{remarque}

\begin{definition}[Gerstenhaber bracket]
	Let $f \in Lin(\A^{\otimes m +1},\A)$ and $g \in Lin(\A^{\otimes n +1},\A)$, ones defines 
	\begin{eqnarray*}
		f \circ_i g := f( id_{\A}^{(i-1)} \otimes g \otimes id_{\A}^{(m-i+1)}),
	\end{eqnarray*}
	and,
	\begin{eqnarray*}
		f \circ g := \sum_{i=1}^{m+1} (-1)^{n(i+1)}f \circ_i g.
	\end{eqnarray*}
	Then the \textbf{Gerstenhaber bracket} of $f$ and $g$ is 
	\begin{eqnarray*}
		[f,g] := f \circ g - (-1)^{mn}g \circ f.
	\end{eqnarray*}
\end{definition}

One can find in \cite{MR0161898} have proved the following proposition 

\begin{proposition}\label{propdoub}
	Let $\A$ an associative algebra, then $(C^{\bullet + 1}(\A,\A),[\;,\;], \delta)$ is a differential graded Lie algebra (dgLa). 
\end{proposition}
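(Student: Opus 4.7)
The plan is to unpack the three dgLa axioms (graded antisymmetry, graded Jacobi, and the derivation property of $\delta$ together with $\delta^2=0$) and verify each one on the shifted complex. I place an $f \in \mathrm{Lin}(\A^{\otimes m+1},\A)$ in degree $m$, so that the associative multiplication $m_\A \in C^2(\A,\A)$ sits in degree $1$. Graded antisymmetry of $[\,,\,]$ is immediate from the defining formula $[f,g] = f\circ g - (-1)^{mn} g\circ f$, since interchanging $f$ and $g$ multiplies by $-(-1)^{mn}$.

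Next I would establish the graded Jacobi identity, which is the arithmetical heart of the statement. The key observation (due to Gerstenhaber) is that $\circ$ is not associative but is a \emph{graded pre-Lie} product: a direct expansion using the placements of $g$ and $h$ inside $f$ gives an identity of the form
\[
 (f\circ g)\circ h - f\circ(g\circ h) \;=\; (-1)^{np}\bigl((f\circ h)\circ g - f\circ(h\circ g)\bigr),
\]
the right-hand side being symmetric in the two arguments inserted into $f$. Antisymmetrising this identity produces exactly the graded Jacobi identity for $[\,,\,]$. The bookkeeping of signs coming from the factors $(-1)^{n(i+1)}$ in the definition of $\circ$ is what requires care: one has to partition the sum $\sum_{i,j}(f\circ_i g)\circ_j h$ according to whether $g$ and $h$ land in disjoint slots of $f$ (these terms cancel after antisymmetrisation) or whether $h$ lands inside $g$ (these are the terms of $f\circ(g\circ h)$).

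For the differential I would use the standard identification
\[
 \delta f \;=\; -[m_\A, f],
\]
which one checks directly on the defining formula for $\delta$: the first and last terms of $\delta f$ are $m_\A \circ_1 f$ and $\pm m_\A \circ_2 f$, while the middle sum reproduces $\pm f\circ m_\A$. Once this identity is in hand, the derivation property $\delta[f,g] = [\delta f, g] + (-1)^{|f|}[f,\delta g]$ becomes an instance of the graded Jacobi identity with $m_\A$ in one slot. Finally, associativity of $\A$ is equivalent to $m_\A \circ m_\A = 0$, hence to $[m_\A, m_\A] = 0$, and another application of graded Jacobi yields
\[
 \delta^2 f \;=\; [m_\A,[m_\A,f]] \;=\; \tfrac{1}{2}[[m_\A,m_\A],f] \;=\; 0.
\]

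The main obstacle will be the sign combinatorics in the pre-Lie identity for $\circ$: the rest of the argument is essentially formal once that identity is correctly stated. I would therefore isolate the pre-Lie computation as a lemma, treating it with a careful case analysis on the relative positions of the insertion slots, and then deduce the dgLa axioms as formal consequences.
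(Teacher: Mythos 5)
Your outline is essentially a reconstruction of Gerstenhaber's original argument, which is exactly what the paper relies on: the paper offers no proof of this proposition at all, it simply cites \cite{MR0161898} and records the identity $\delta f = (-1)^{n-1}[m_{\A},f]$ afterwards. So you are not diverging from the paper so much as supplying the proof it omits, and your route (graded antisymmetry from the defining formula, graded Jacobi from the graded pre-Lie identity for $\circ$, the differential as a bracket with $m_{\A}$, and $\delta^2=0$ from $[m_{\A},m_{\A}]=0$ plus Jacobi) is the standard and correct one. Two caveats. First, your identification $\delta f = -[m_{\A},f]$ is not quite right under the paper's conventions: unwinding the signs $(-1)^{n(i+1)}$ in $f\circ g$ gives $\delta f = (-1)^{n-1}[m_{\A},f]$ for $f\in C^{n}(\A,\A)$ (also note $m_{\A}\circ_1 f$ is the \emph{last} term of $\delta f$ and $m_{\A}\circ_2 f$ the first, not the other way around); the degree-dependent sign is precisely what makes the derivation property fall out of graded Jacobi with the correct Koszul sign, so it cannot be dropped. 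Second, the graded pre-Lie identity
\[
(f\circ g)\circ h - f\circ(g\circ h) = (-1)^{np}\bigl((f\circ h)\circ g - f\circ(h\circ g)\bigr)
\]
is asserted but not verified, and as you acknowledge, the case analysis on insertion slots and the attendant sign bookkeeping is the entire nontrivial content of the proposition. As written, then, this is a correct and well-organized plan rather than a complete proof; carrying out the pre-Lie lemma (or, as the paper does, citing \cite{MR0161898} for it) is what remains.
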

It has been show in \cite{MR0161898} if one sees the multiplication of $\A$ as an element in $C^2(\A,\A)$ then we have $\delta f = (-1)^{n-1} [m_{\A}, f]$ (where $m_{\A}$ is the multiplication in $\A$ and $f \in C^n(\A,\A)$).

\section{Non-abelian extensions in terms of Deligne groupoid }

The aim of this section is to prove theorem $\ref{thmprinc}$. We proceed in two times. The first time is to prove corollary $\ref{corollairecocycle}$ which states that the set of non-abelian cocycles are in bijection with the Maurer-Cartan elements of a dgLa $L$ (that we introduce in subsection 2.1). In the second time, we show that equivalence relation on $Z^2(\B,\A)$ can be interpreted as gauge equivalence relation on $MC(L)$. Theses two steps give us the theorem $\ref{thmprinc}$.

\subsection{Non abelian cocycles as Maurer-Cartan elements}

Firstly, one characterizes the (associative) multiplication $m$ of an extension $\E$ of $\B$ by $\A$. One has that 
$$\begin{CD}
	0 @>>> \A @>>> \E @>p>> \B @>>> 0 \\
	   @.	         @|	     @|        @VV\sim V\\
	0 @>>> A @>>> A\oplus B @>P>> B @>>> 0, 
\end{CD}$$
with a section $s$ (i.e. $p\circ s = id_{\B}$), and where $A$ is the vector space image of $\A$ in $\E$ and $ B = s(\B)$ an arbitrary supplementary of $A$ in $\E$.\\
We define a handy notation to compute the components of $m$. Consider the canonical projection 
$$
	P_{X_1...X_n} : (A \oplus B)^{\otimes^n} \rightarrow X_1 \otimes ... \otimes X_n,
$$
where $X_i \in \{ A , B \}$.
And let $L$ be a linear map $L : (A \oplus B)^{\otimes^n} \rightarrow A \oplus B$ one denotes  
$$
	L_{X_1...X_n}^{X_{n+1}} := P_{X_{n+1}}\circ L \circ i \circ P_{X_1...X_n}
$$
where $i : X_1 \otimes ... \otimes X_n \rightarrow (A \oplus B)^{\otimes^n} $  is the inclusion.\\

Now, one can compute the components of $m$. 
\begin{lemme}
	\begin{itemize}
		\item{$m_{AB}^{B} = m_{BA}^B = m_{AA}^B = 0 $.}
		\item{One identifies $m_{BB}^B$ with multiplication in $\B$, and $m_{AA}^{A}$ with multiplication in $\A$.}
		\item{Finally, to make the connection with definition $\ref{defcoho}$ one can introduce the notation $m_{BB}^{A} = \chi $, $m_{BA}^{A} = \varphi $ and $m_{AB}^{A} = \psi $.}
	\end{itemize}
\end{lemme}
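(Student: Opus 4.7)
The plan is to exploit that in the short exact sequence
\[
0 \to \A \to \E \xrightarrow{p} \B \to 0,
\]
the subspace $A \subset \E$ is the kernel of the algebra morphism $p$, hence a two-sided ideal of $\E$. This immediately takes care of the vanishing statements: for any $e \in \E$ and $a \in A$, both $e\cdot a$ and $a\cdot e$ lie in $A$, so their projection onto $B$ is zero. Applied to the three cases $(X_1,X_2) \in \{(A,B),(B,A),(A,A)\}$, this yields $m_{AB}^B = m_{BA}^B = m_{AA}^B = 0$.

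For the identification of $m_{BB}^B$ with the multiplication of $\B$, I would use that $p$ is an algebra morphism and that by construction $B = s(\B)$ with $p\circ s = \mathrm{id}_{\B}$. Writing $b_i = s(\beta_i)$ for $\beta_i \in \B$, the $B$-component of $b_1 \cdot b_2$ equals $s\bigl(p(b_1 \cdot b_2)\bigr) = s(\beta_1 \cdot \beta_2)$, because once we project onto $B$ along $A$, the value is completely determined by $p$. Transporting through the isomorphism $s : \B \xrightarrow{\sim} B$, this is exactly $m_{\B}(\beta_1,\beta_2)$. Symmetrically, the inclusion $\A \hookrightarrow \E$ restricts to an algebra morphism onto $A$, so for $a_1,a_2 \in A$ the product $a_1\cdot a_2$ already lies in $A$ and is nothing but the image of $m_{\A}(a_1,a_2)$; hence $m_{AA}^A$ is identified with $m_{\A}$.

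The last bullet is purely a renaming: having fixed the decomposition $\E \simeq A \oplus B$ and checked that no other components can occur, the only free data left in $m$ are the three maps $m_{BB}^A$, $m_{BA}^A$, $m_{AB}^A$, which we baptise $\chi$, $\varphi$, $\psi$ in order to match the notation of Definition \ref{defcoho}. No computation is required at this step; it only records that the components of $m$ not forced to vanish or to coincide with $m_{\A}, m_{\B}$ are exactly the three maps appearing in a non-abelian $2$-cocycle.

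There is no real obstacle here: the only substantive input is that $A = \ker p$ is an ideal, which follows from $p$ being an algebra morphism. The subsequent section will then have to check, via the associativity of $m$ in $\E$, that the triple $(\varphi,\psi,\chi)$ satisfies the cocycle relations (\ref{defg})--(\ref{defcocy}); but that is beyond the scope of this particular lemma.
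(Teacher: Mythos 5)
Your proof is correct and follows essentially the same route as the paper's: the vanishing of the $B$-valued components comes from $A=\ker p$ being an ideal (equivalently, $P$ being an algebra morphism killing $A$), and the identifications of $m_{BB}^B$ and $m_{AA}^A$ are obtained by conjugating through the isomorphism $s:\B\xrightarrow{\sim}B$ and the inclusion $\A\hookrightarrow A$. Nothing further is needed.
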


\begin{proof}
One has 
\[
	m_{AB}^{B}(a_1 + b_1, a_2 + b_2 ) := P(m(a_1,b_2)).					
\]
But $P$ is a morphism of algebras with kernel $A$, on obtain that
\[
\begin{aligned}
	m_{AB}^{B}(a_1 + b_1, a_2 + b_2 ) &=P(a_1)\cdot P(b_2))\\
								&= 0 \cdot P(b_2) \\
								&= 0.
\end{aligned}						
\]
Analogously, one has $m_{BA}^B = m_{AA}^B = 0 $.\\
Since the projection $pr_{|B}$ has inverse $s$, we can identified $m_{BB}^B$ with multiplication in $\B$ by conjugaison. And similarly for $m_{AA}^{A}$ and multiplication in $\A$.
\end{proof}

The multiplication $m$ is associative so its associator vanishes. The vanishing of this associator is equivalent to the vanishing of all its components.
\begin{proposition} \label{propas}
	The vanishing of the components of the associator of $m$ gives us :
	\begin{description}
	\item{$ As_{BBB}^{B}$ : $m_{\B}$ is associative,}
	\item{$ As_{BBA}^{A}$ : $A$ is a left twisted $\B$-module with action $\varphi$ (i.e. $\varphi$ satisfies equation $(\ref{defg})$),}
	\item{$ As_{ABB}^{A}$ : $A$ is a right twisted $\B$-module with action $\psi$ (i.e. $\psi$ satisfies equation $(\ref{defd})$),}
	\item{$ As_{BAB}^{A}$ : $\varphi$, $\psi$ verify equation (\ref{defbi}) (i.e. $A$ is a twisted $\B$-bimodule)}
	\item{$ As_{BBB}^{A}$ : $\chi$ is a "Hochschild" cocycle,}
	\item{$ As_{AAB}^{A} + As_{ABA}^{A} + As_{BAA}^{A}$ : $\psi - \varphi : \B \rightarrow Der(\A)$}
	\item{$ As_{AAA}^{A}$ : $m_{\A}$ is associative.}
	\end{description}
\end{proposition}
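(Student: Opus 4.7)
The plan is to compute, case by case, each component $As^{X_4}_{X_1 X_2 X_3}(x_1, x_2, x_3) := [m(m(x_1, x_2), x_3) - m(x_1, m(x_2, x_3))]^{X_4}$ of the associator by expanding $m$ along the direct sum decomposition $\E = A \oplus B$ and invoking the previous lemma, which identifies the only five non-zero components: $m^A_{AA} = m_\A$, $m^B_{BB} = m_\B$, $m^A_{BA} = \varphi$, $m^A_{AB} = \psi$, $m^A_{BB} = \chi$. Since $m$ is associative iff $As = 0$, and by multilinearity $As = 0$ iff all such components vanish, it suffices to identify each bullet in the statement with one such component.

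I would start by eliminating redundant cases. If the output is $B$ and at least one input lies in $A$, then the inner product containing that input already has zero $B$-projection (because $m^B_{XY}$ is non-zero only for $X = Y = B$), and feeding the resulting pure-$A$ intermediate element into the outer product again yields nothing in $B$. Hence the only non-trivial $B$-output component is $As^B_{BBB}$, the associator of $m_\B$ itself, which gives the first bullet; and $As^A_{AAA}$ is purely internal to $\A$ and gives the last bullet.

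For the four remaining rigid $A$-output cases, the computations are short expansions: for $As^A_{BBA}$, the inner product $m(b_1, b_2) = \chi(b_1, b_2) + m_\B(b_1, b_2)$ produces $\chi(b_1, b_2) \cdot a + \varphi_{b_1 \cdot b_2}(a)$ on the outer level, matched against $\varphi_{b_1}(\varphi_{b_2}(a))$, giving equation (\ref{defg}); the symmetric calculation yields (\ref{defd}) from $As^A_{ABB}$; $As^A_{BAB}$ commutes $\varphi_{b_1}$ past $\psi_{b_2}$ to produce (\ref{defbi}); and $As^A_{BBB}$, after cancellation of the $m_\B$-associativity terms, leaves precisely the Hochschild-type cocycle identity (\ref{defcocy}).

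The delicate bullet is the joint treatment of $As^A_{AAB}$, $As^A_{ABA}$, $As^A_{BAA}$. Expanded individually they produce, respectively, the three compatibility relations of remark \ref{remder} between $\varphi$, $\psi$ and $m_\A$ (namely $\psi_b(a_1 \cdot a_2) = a_1 \cdot \psi_b(a_2)$, $\psi_b(a_1) \cdot a_2 = a_1 \cdot \varphi_b(a_2)$, $\varphi_b(a_1 \cdot a_2) = \varphi_b(a_1) \cdot a_2$), whose conjunction is in general strictly stronger than (\ref{defder}). The content of the bullet is that the appropriate signed combination of these three components — with signs dictated by the Gerstenhaber-bracket conventions used in proposition \ref{propdoub} — collapses exactly to $(\psi-\varphi)_b(a_1 \cdot a_2) - (\psi-\varphi)_b(a_1) \cdot a_2 - a_1 \cdot (\psi-\varphi)_b(a_2)$, i.e.\ to (\ref{defder}). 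Producing this signed combination, rather than invoking the three stronger individual relations, is the main subtlety; every other verification is routine bilinear algebra.
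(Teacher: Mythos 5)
Your overall strategy---expand each component of the associator using the lemma, discard everything involving the vanishing components $m_{AB}^{B}, m_{BA}^{B}, m_{AA}^{B}$, and match the survivors against the defining equations of a non-abelian cocycle---is exactly the paper's, and your treatment of six of the seven bullets is correct (your quick argument for why all $B$-output components other than $As_{BBB}^{B}$ vanish identically is a cleaner packaging of what the paper checks term by term at the end of its proof).

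The one place you diverge is the derivation bullet, and there your justification is off on two counts. First, the three components are
\begin{align*}
As_{AAB}^{A}(a_1,a_2,b) &= \psi_b(a_1\cdot a_2) - a_1\cdot\psi_b(a_2),\\
As_{ABA}^{A}(a_1,b,a_2) &= \psi_b(a_1)\cdot a_2 - a_1\cdot\varphi_b(a_2),\\
As_{BAA}^{A}(b,a_1,a_2) &= \varphi_b(a_1)\cdot a_2 - \varphi_b(a_1\cdot a_2),
\end{align*}
and the combination that collapses to the Leibniz defect of $\psi-\varphi$ is $As_{AAB}^{A}-As_{ABA}^{A}+As_{BAA}^{A}$ (evaluated at suitably permuted arguments); the identity is true, but the signs $(+,-,+)$ are \emph{not} dictated by the Gerstenhaber conventions. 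In Lemma \ref{lemmeMC} the terms marked $\bigstar$, $\dagger$, $\ast$ all enter the Maurer--Cartan equation with the same relative coefficient, i.e.\ the combination that actually occurs is the unsigned sum appearing in the statement, and that sum does \emph{not} equal the Leibniz defect of $\psi-\varphi$. Second, and more importantly, the three components are supported on the pairwise disjoint summands $A\otimes A\otimes B$, $A\otimes B\otimes A$ and $B\otimes A\otimes A$ of $(\A\oplus\B)^{\otimes 3}$, so no linear combination of them can exhibit cancellation as a map: the vanishing of the (signed or unsigned) sum is equivalent to the vanishing of each component separately, which yields the three relations of Remark \ref{remder}, strictly stronger than (\ref{defder}). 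The paper's proof embraces this: it derives the three relations and then observes that they imply the Leibniz rule for $(\psi-\varphi)(b)$. Your signed combination is a legitimate shortcut for the implication ``components vanish $\Rightarrow$ (\ref{defder})'', but presenting it as a way to \emph{avoid} the stronger individual relations inverts the logic; those relations hold whenever the associator vanishes, whether or not one records them.
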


\begin{proof}
Let be $e = (e_1, e_2, e_3) \in \E^3$ and $e_i = a_i + b_i$. On has

\begin{eqnarray*}
	As_{BBB}^{B}(e) &= &  m_{BB}^B(m_{BB}^B(e_1,e_2),e_3) +\cancel{m_{AB}^B}(m_{BB}^A(e_1,e_2),e_3) \\
					&&- m_{BB}^B(e_1,m_{BB}^B(e_2,e_3)) - \cancel{m_{BA}^B}(e_1,m_{BB}^A(e_2,e_3))\\
			      & = & (b_1\cdot b_2)\cdot b_3 - b_1\cdot(b_2\cdot b_3) .
\end{eqnarray*}	
\begin{eqnarray*}	      
	As_{BBA}^{A} (e) & = & \underbrace{m_{BA}^A(m_{BB}^B(e_1,e_2),e_3)}_{\Box_1} \underbrace{ + m_{AA}^A(m_{BB}^A(e_1,e_2),e_3)}_{\Box_2}\\
					&& - m_{BB}^A(e_1, \cancel{m_{BA}^B}(e_2,e_3))  \underbrace{- m_{BA}^A(e_1,m_{BA}^A(e_2,e_3))}_{\Box_3}\\
			       & = &	  \varphi_{b_1\cdot b_2}(a_3) + \chi(b_1,b_2)\cdot a_3 - \varphi_{b_1}(\varphi_{b_2}(a_3)).
\end{eqnarray*}
\begin{eqnarray*}
	As_{ABB}^{A}(e) &=& m_{BB}^A(\cancel{m_{AB}^B}(e_1,e_2),e_3) \underbrace{ + m_{AB}^A(m_{AB}^A(e_1,e_2),e_3)}_{\boxminus_3}\\
					&& \underbrace{- m_{AB}^A(e_1,m_{BB}^B(e_2,e_3))}_{\boxminus_1} \underbrace{- m_{AA}^A(e_1,m_{BB}^A(e_2,e_3))}_{\boxminus_2}\\
				&=& \psi_{b_3}(\psi_{b_2}(a_1)) - \psi_{b_2\cdot b_3}(a_1) - a_1 \cdot \chi(b_2,b_3).
\end{eqnarray*}
\begin{eqnarray*}
	As_{BAB}^{A} (e) &=& m_{BB}^A (\cancel{m_{BA}^B}(e_1,e_2),e_3) \underbrace{+ m_{AB}^A(m_{BA}^A(e_1,e_2),e_3)}_{\boxplus_2}\\
					&&- m_{BB}^A(e_1,\cancel{m_{AB}^B}(e_2,e_3))  \underbrace{- m_{BA}^A(e_1,m_{AB}^A(e_2,e_3))}_{\boxplus_1}\\
				&=& \psi_{b_3}(\varphi_{b_1}(a_2)) - \varphi_{b_1}(\psi_{b_3}(a_2)).
\end{eqnarray*}
\begin{eqnarray*}
	As_{BBB}^{A} (e) &=& \underbrace{m_{BB}^A (m_{BB}^B(e_1,e_2),e_3)}_{\bigtriangledown_1'} \underbrace{ + m_{AB}^A (m_{BB}^A (e_1,e_2),e_3)}_{\bigtriangledown_2'}\\
					&& \underbrace{- m_{BB}^A (e_1,m_{BB}^B(e_2,e_3) )}_{\bigtriangledown_1''}\underbrace{- m_{BA}^A(e_1,m_{BB}^A (e_2,e_3))}_{\bigtriangledown_2''}\\
				&=& \chi(b_1\cdot b_2,b_3) + \psi_{b_3}(\chi(b_1,b_2)) - \chi(b_1,b_2 \cdot b_3) - \varphi_{b_1}(\chi(b_2,b_3))\\
				&=& - \delta \chi(b_1,b_2,b_3). 
\end{eqnarray*}
Where $\delta$ is Hochschild differential.
\begin{eqnarray*}
	As_{AAB}^{A} (a_1,a_2,b) &=& m_{BB}^A (\cancel{m_{AA}^B}(a_1,a_2),b) \underbrace{+ m_{AB}^A (m_{AA}^A(a_1,a_2),b)}_{\bigstar_2}\\
					&& - m_{AB}^A(a_1,\cancel{m_{AB}^B} (a_2,b)) \underbrace{-m_{AA}^A(a_1,m_{AB}^A(a_2,b))}_{\bigstar_1}\\
				&=& \psi_{b}(a_1\cdot a_2) - a_1 \cdot \psi_{b}(a_2).
\end{eqnarray*}
\begin{eqnarray*}
	As_{ABA}^{A}  (a_1,b,a_2) &=&  m_{BA}^A(\cancel{m_{AB}^B}(a_1,b),a_2)  \underbrace{+ m_{AA}^A(m_{AB}^A(a_1,b),a_2)}_{\dagger_2}\\
					&& - m_{AB}^A(a_1,\cancel{m_{BA}^B}(b,a_2))\underbrace{ - m_{AA}^A(a_1,m_{BA}^A(b,a_2))}_{\dagger_1}\\
				 &=& \psi_{b}(a_1)\cdot a_2 - a_1\cdot \varphi_{b}(a_2).
\end{eqnarray*}
\begin{eqnarray*}
	As_{BAA}^{A} (b,a_1,a_2) &=& m_{BA}^A(\cancel{m_{BA}^B}(b,a_1),a_2)  \underbrace{+ m_{AA}^A(m_{BA}^A(b,a_1),a_2)}_{\ast_1}\\
					&& - m_{BB}^A(b,\cancel{m_{AA}^B}(a_1,a_2)) \underbrace{- m_{BA}^A(b,m_{AA}^A(a_1,a_2))}_{\ast_2}\\
				&=& \varphi_{b}(a_1)\cdot a_2 - \varphi_{b}(a_1 \cdot a_3).
\end{eqnarray*}

The vanishing of these three last components together with remark $\ref{remder}$ gives the Leibniz's rule for $(\psi - \varphi)(b)$ which is hence a derivation of $\A$.

\begin{eqnarray*}
	As_{AAA}^A (e) &=& m_{BA}^A(\cancel{m_{AA}^B}(e_1,e_2),e_3) + m_{AA}^A(m_{AA}^A(e_1,e_2),e_3)\\
					&& - m_{AB}^A(e_1,\cancel{m_{AA}^B}(e_2,e_3)) - m_{AA}^A(e_1,m_{AA}^A(e_2,e_3))\\
				&=& (a_1 \cdot a_2)\cdot a_3 - a_1\cdot (a_2 \cdot a_3).\\
\end{eqnarray*}

Also we finally check that other components do not contribute : 

\begin{eqnarray*}
	As_{ABB}^B (e) &=& m_{BB}^B(\cancel{m_{AB}^B}(e_1,e_2),e_3) + \cancel{m_{AB}^B}(m_{AB}^A(e_1,e_2),e_3)\\
					&& - \cancel{m_{AB}^B}(e_1,m_{BB}^A(e_2,e_3)) - \cancel{m_{AA}^B}(e_1,m_{BB}^A(e_2,e_3))\\
				&=& 0.
\end{eqnarray*}
\begin{eqnarray*}
	As_{AAB}^B (e) &=& m_{BB}^B(\cancel{m_{AA}^B}(e_1,e_2),e_3) + \cancel{m_{AB}^B}(m_{AA}^A(e_1,e_2),e_3)\\
					&& - \cancel{m_{AB}^B}(e_1,\cancel{m_{AB}^B}(e_2,e_3)) - \cancel{m_{AA}^B}(e_1,m_{AB}^A(e_2,e_3))\\
				&=& 0.
\end{eqnarray*}
\begin{eqnarray*}
	As_{ABA}^B (e) &=& \cancel{m_{BA}^B}(\cancel{m_{AB}^B}(e_1,e_2),e_3) + \cancel{m_{AA}^B}(m_{AB}^A(e_1,e_2),e_3)\\
					&& - \cancel{m_{AB}^B}(e_1,\cancel{m_{BA}^B}(e_2,e_3)) - \cancel{m_{AA}^B}(e_1,m_{BA}^A(e_2,e_3))\\
				&=& 0.
\end{eqnarray*}
\begin{eqnarray*}
	As_{BAA}^B (e) &=& \cancel{m_{BA}^B}(\cancel{m_{BA}^B}(e_1,e_2),e_3) + \cancel{m_{AA}^B}(m_{BA}^A(e_1,e_2),e_3)\\
					&& - m_{BB}^B(e_1,\cancel{m_{AA}^B}(e_2,e_3)) - \cancel{m_{BA}^B}(e_1,m_{AA}^A(e_2,e_3))\\
				&=& 0. 
\end{eqnarray*}
\begin{eqnarray*}
	As_{AAA}^B (e) &=& \cancel{m_{BA}^B}(\cancel{m_{AA}^B}(e_1,e_2),e_3) + \cancel{m_{AA}^B}(m_{AA}^A(e_1,e_2),e_3)\\
					&& - \cancel{m_{AB}^B}(e_1,\cancel{m_{AA}^B}(e_2,e_3)) - \cancel{m_{AA}^B}(e_1,m_{AA}^A(e_2,e_3))\\
				&=& 0.
\end{eqnarray*}

\end{proof}

\begin{remarque}
	Now one can see that the definition $\ref{defcoho}$, which may seem ad-hoc, appears naturally in this context.
\end{remarque}

We now finally define the algebra $L$ of the theorem $\ref{thmprinc}$.
\begin{definition}
	Let $\A$ and $\B$ two associative algebras such that $\A$ is a $\A\oplus \B$-bimodule via the action of $\A$ on itself. We define 
		$$
			L :=\bigoplus_{(m,n) \in \mathbb{N}\times\mathbb{N^*}}L^{m,n}
		$$ 
		where $L^{m,n} := Lin(\A^{\otimes m}\oplus (\A ^{\otimes (m-1)}\otimes \B )\oplus (\A^{\otimes (m-2)}\otimes \B \otimes \A)\oplus... \oplus \B^{\otimes n}, \A )$.
\end{definition}

\begin{proposition}
	$L$ is a sub-differential graded Lie algebra of $(C^{\bullet + 1}(\A \oplus \B, \A \oplus \B), [\;,\;], \delta)$ (where $[\;,\;]$ is the Gerstenhaber bracket and $\delta$ is the Hochschild differential).
\end{proposition}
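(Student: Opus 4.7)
The plan is to verify two things: (i) $L$ is closed under the Gerstenhaber bracket, and (ii) $L$ is closed under the Hochschild differential $\delta$. Both rest on one bookkeeping observation: an element $f \in L^{m,n}$ sits inside $C^{m+n}(\A\oplus\B,\A\oplus\B)$ as a multilinear map characterised by (a) its image is contained in the $\A$-summand of $\A \oplus \B$, and (b) it vanishes on any typed tensor $X_1\otimes\cdots\otimes X_{m+n}$ with $X_i \in \{\A,\B\}$ unless exactly $m$ of the $X_i$ equal $\A$ and the remaining $n$ equal $\B$. I will track how the partial compositions $\circ_i$ interact with these two constraints.

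First, closure under the bracket. Take $f \in L^{m_1,n_1}$ and $g \in L^{m_2,n_2}$. Since $g$ takes values in $\A$, the substitution $f\circ_i g$ feeds an $\A$-valued expression into the $i$-th slot of $f$. Consequently $f\circ_i g$ vanishes on a typed input unless the $i$-th slot of $f$ is labelled $\A$, and then the $m_2+n_2$ arguments absorbed by $g$ must consist of exactly $m_2$ copies of $\A$ and $n_2$ copies of $\B$. A direct count of types gives $f\circ_i g \in L^{m_1+m_2-1,\,n_1+n_2}$, with output in $\A$ because $f$ does. Summing over $i$ and antisymmetrising, $[f,g] \in L^{m_1+m_2-1,\,n_1+n_2}$; the condition $n_1+n_2 \geq 1$ is automatic from $n_1,n_2 \geq 1$.

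Next, closure under $\delta$. Proposition \ref{propdoub} identifies $\delta$, up to sign, with the bracket with the multiplication, so here $\delta = \pm[m,\,\cdot\,]$ where $m = m_\A + m_\B$ is the direct-product multiplication on $\A\oplus\B$ (acting componentwise, with mixed products zero). Decompose $[m,f] = [m_\A,f] + [m_\B,f]$. The first term lies in $L^{m_f+1,n_f}$ by the previous paragraph applied to $m_\A \in L^{2,0}$. For the second, note that $m_\B$ outputs in $\B$ and accepts only $\B$-typed inputs: therefore $m_\B \circ f$ vanishes (the $\A$-valued output of $f$ cannot sit in a $\B$-slot of $m_\B$), while $f\circ m_\B$ gets nonzero contributions only from positions $i$ where the $i$-th slot of $f$ is labelled $\B$, producing an element of $L^{m_f,n_f+1}$. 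Adding, $\delta f \in L^{m_f+1,n_f} \oplus L^{m_f,n_f+1} \subseteq L$.

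The main subtlety is the treatment of $m_\B$: although $m_\B \notin L$ (its output is in $\B$), the $\A$-valued output constraint on elements of $L$ forces half of the bracket $[m_\B,f]$ to vanish, so the total differential still lands in $L$. Once this is granted, the proof reduces to counting $\A$- and $\B$-slots at each position of $\circ_i$, which is routine.
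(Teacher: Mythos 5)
Your proof is correct and follows essentially the same route as the paper's: decompose elements of $L$ by the type (number of $\A$- and $\B$-slots) of their inputs, count how the Gerstenhaber insertions $\circ_i$ shift these counts to get closure under the bracket, and then treat $\delta$ as $\pm[m_\A+m_\B,\,\cdot\,]$ to reduce closure under the differential to the same counting. Your explicit observation that $m_\B\circ f$ vanishes because $f$ is $\A$-valued is a welcome clarification of a point the paper's proof passes over silently, but it does not change the argument.
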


\begin{proof}
 	First, one shows that $L$ is closed under Gerstenhaber bracket. Let $f  \in L^{m,n}$ then one can decompose $f$ as the sum of $f_{i,j}$ where $f_{i,j} \in Lin(\A\otimes \B \otimes \A\otimes \A \otimes...,\A) = L_{i,j}$ (where $\A$ appears $i$-times and $\B$ $j$-times). \\
	Let $f$ in $L^{m_1,n_1}$ and $g$ in $L^{m_2,n_2}$, then one has
	$$
		[f_{i,j},g_{k,l}] \in L_{i + k - 1, j + l}.
	$$
	Hence
	$$
		[f,g] \in L^{m_1 + m_2 - 1,n_1 + n_2},
	$$
	i.e. $L$ is closed under Gerstenhaber bracket.\\
	Next, one have to show that $C_>$ is closed under $\delta$. Thanks to proposition $(\ref{propdoub})$, we have $\delta = (-1)^{*-1} [ m_{\A} + m_{\B},  ]$. Let $f  \in L^{m,n}$ 
	$$
		\delta f = (-1)^{max(m,n)-1}[ m_{\A} + m_{\B}, f ] = (-1)^{max(m,n)-1}([ m_{\A}, f] + [ m_{\B} , f ]).
	$$
	But if we decompose $f$ as before, the we have
	$$
		[ m_{\A}, f_{i,j}] \in L_{i + 1, j}
	$$
	and 
	$$
		[ m_{\B}, f_{i,j}] \in L_{i, j+1}.
	$$
	Hence $ \delta f_{i,j} \in L_{i+1,j+1}$, so we have $\delta f \in L^{m+1,n+1}$ and $L$ is closed under $\delta$.
\end{proof}

\begin{lemme} \label{lemmeMC}
	Let $\B$ and $\A$ two associative algebras on $B$, $A$, one has
		$$ 
			A_{BBB}^A + A_{BBA}^A +A_{BAB}^A +A_{ABB}^A +A_{AAB}^A +A_{ABA}^A +A_{BAA}^A = 0 
		$$ 
		$$ 
			\iff m_{BB}^A + m_{BA}^A + m_{AB}^A \in MC(L).
		$$
\end{lemme}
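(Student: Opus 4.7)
The approach is to view the statement through the standard deformation-theoretic principle that associativity of a deformed product is equivalent to a Maurer--Cartan equation in the ambient Gerstenhaber dgLa.

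Let $c := m_{BB}^A + m_{BA}^A + m_{AB}^A$, a degree $1$ element of $L$ sitting in $L^{0,2} \oplus L^{1,1}$, and set $m_0 := m_{\A} + m_{\B}$, viewed as an element of $C^{2}(\A \oplus \B, \A \oplus \B)$. By the previous lemma the total multiplication of the extension decomposes as $m = m_0 + c$, since the only nonzero projections of $m$ are the components of $c$ together with $m_{\A}$ and $m_{\B}$. Consequently $\tfrac{1}{2}[m,m]$ is (up to a sign from Proposition \ref{propdoub}) the associator of $m$ in the Gerstenhaber dgLa, and I would expand
\[
\tfrac{1}{2}[m,m] \;=\; \tfrac{1}{2}[m_0,m_0] + [m_0,c] + \tfrac{1}{2}[c,c].
\]

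The first summand vanishes: $[m_{\A},m_{\A}]$ and $[m_{\B},m_{\B}]$ are zero by associativity of $\A$ and $\B$, and $[m_{\A},m_{\B}]=0$ because every operadic insertion between them is identically zero (the output of one does not match the input type expected by the other). Applying the identity $\delta = \pm [m_0, -]$ recalled after Proposition \ref{propdoub}, the remaining sum $[m_0,c] + \tfrac{1}{2}[c,c]$ equals, up to an overall sign, the Maurer--Cartan expression $\delta c + \tfrac{1}{2}[c,c]$ for $c$ in $L$. Hence $c \in MC(L)$ is equivalent to $[m,m]=0$, i.e.~to the vanishing of every component $As_{XYZ}^{W}$ of the associator of $m$.

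By Proposition \ref{propas}, $As_{BBB}^{B}$ is the associativity of $\B$ and $As_{AAA}^{A}$ the associativity of $\A$, both of which hold by hypothesis; the seven components $As_{XYZ}^{B}$ with at least one $A$ among $X,Y,Z$ are identically zero because each summand in their expansion factors through one of the vanishing projections $m_{AA}^{B}, m_{AB}^{B}, m_{BA}^{B}$; and the remaining seven components are precisely those appearing in the lemma's sum. These seven are supported on disjoint input patterns and are linearly independent as maps on $\E^{\otimes 3}$, so the sum vanishes iff each vanishes, matching $[m,m]=0$ exactly. The principal obstacle I anticipate is the careful bookkeeping of signs between the Gerstenhaber bracket convention, the identification $\delta = \pm [m_0, -]$, and the Maurer--Cartan equation, so that $\tfrac{1}{2}[m,m]$ matches $\delta c + \tfrac{1}{2}[c,c]$ up to an irrelevant overall sign rather than up to a nontrivial correction; everything else is expansion and component matching.
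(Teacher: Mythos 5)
Your proposal follows essentially the same route as the paper, only packaged more conceptually: the paper's proof is precisely the hand expansion of your identity $\tfrac12[m,m]=[m_{\A}+m_{\B},c]+\tfrac12[c,c]$, computing $[m_{\B},c]$, $[m_{\A},c]$ and $\tfrac12[c,c]$ term by term and matching each labelled summand against the correspondingly labelled summand of the associator components from Proposition \ref{propas}, whereas you obtain the identity at once from bilinearity of the bracket and $[m_{\A},m_{\A}]=[m_{\B},m_{\B}]=[m_{\A},m_{\B}]=0$. Your reduction of $[m,m]=0$ to the vanishing of the seven components in the lemma (the components $As_{BBB}^{B}$ and $As_{AAA}^{A}$ vanishing by hypothesis, the $B$-valued components with an $A$ input vanishing identically), and the disjoint-support argument for splitting the sum, are exactly what the paper does and are correct.

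The one step that does not go through as you hope is the sign you flag at the end, and it is \emph{not} an irrelevant overall sign. With the paper's conventions $\delta f=(-1)^{n-1}[m_{\A}+m_{\B},f]$ for $f\in C^{n}$, so for the arity-two element $c$ one has $[m_{\A}+m_{\B},c]=-\delta c$ and hence
$$
\tfrac12[m,m]\;=\;-\delta c+\tfrac12[c,c],
$$
which is neither $\delta c+\tfrac12[c,c]$ nor its negative: the two summands of the Maurer--Cartan expression acquire \emph{opposite} signs. One can check this concretely on the $B\otimes B\otimes A$ component, where $As_{BBA}^{A}=\varphi_{b_1\cdot b_2}(a_3)+\chi(b_1,b_2)\cdot a_3-\varphi_{b_1}(\varphi_{b_2}(a_3))$ while $(\delta c+\tfrac12[c,c])$ restricted there equals $-\varphi_{b_1\cdot b_2}(a_3)-\chi(b_1,b_2)\cdot a_3-\varphi_{b_1}(\varphi_{b_2}(a_3))$. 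As literally stated, vanishing of the associator is therefore equivalent to $-c\in MC(L)$, or to $c$ satisfying the Maurer--Cartan equation for the convention $\delta=+[m_{\A}+m_{\B},-]$ in this degree. You should know that the paper's own proof silently makes the same slip: it records $\delta c=-[m_{\A}+m_{\B},c]$ but then matches the terms of $+[m_{\A}+m_{\B},c]+\tfrac12[c,c]$ against the associator. So your instinct to worry was right; the fix is a harmless global change of sign convention (or replacing $c$ by $-c$), but it has to be made explicit, and ``up to an irrelevant overall sign'' is not an accurate description of what actually happens.
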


\begin{proof}
	Let $c = m_{BB}^A + m_{BA}^A + m_{AB}^A \in MC(L)$. One compute 
		$$
			(\delta c + \dfrac{1}{2} [ c, c ] \, )(e_1,e_2,e_3)= 0
		$$
	with $e_i = a_i + b_i \in \E = \A \oplus \B$.\\
	First, we have,
	$$
		\delta c = \,-( [ m_{\B} + m_{\A}, c ] )\, = \,-( [ m_{\B} , c ] \, + \, [  m_{\A}, c ]) \, 
	$$
	where $m_{\B}$ (resp. $m_{\A}$) is the multiplication on $\B$ (resp. on $\A$).
	Then we compute, 
	\begin{eqnarray*}
		[ m_{\B} , c ](e_1,e_2,e_3) &=& [ m_B , m_{BB}^A + m_{BA}^A + m_{AB}^A ](e_1,e_2,e_3) \\
						      &=& \underbrace{m_{BB}^A(m_{\B}(b_1,b_2),b_3) - m_{BB}^A(b_1,m_{\B}(b_2,b_3))}_{\bigtriangledown_1} \underbrace{+ m_{BA}^A ( m_{\B}(b_1,b_2),a_3)}_{\Box_1}\\
						      &&\underbrace{ - m_{AB}^A(a_1,m_{\B}(b_2,b_3))}_{\boxminus_1},
	\end{eqnarray*}
	\begin{eqnarray*}
		[ m_{\A}, c ](e_1,e_2,e_3) &=& [ m_A , m_{BB}^A + m_{BA}^A + m_{AB}^A ](e_1,e_2,e_3) \\
						      &=& \underbrace{m_{\A}(m_{BB}^A(b_1,b_2),a_3)}_{\Box_2} \underbrace{- m_{\A}(a_1,m_{BB}^A(b_2,b_3))}_{\boxminus _2}\\
						      && \underbrace{+ m_{\A}(m_{BA}^A(b_1,a_2),a_3) }_{\ast_1} \underbrace{- m_{\A}(a_1,m_{BA}^A(b_2,a_3))}_{\dagger_1} \underbrace{- m_{BA}^A(b_1,m_{\A}(a_2,a_3))}_{\ast_2}\\
						      && \underbrace{+ m_{\A}(m_{AB}^A(a_1,b_2),a_3)}_{\dagger_2} \underbrace{- m_{\A}(a_1,m_{AB}^A(a_2,b_3))}_{\bigstar_1}  \underbrace{+ m_{AB}^A(m_{\A}(a_1,a_2),b_3)}_{\bigstar_2}.
	\end{eqnarray*}
	Next, we have 
	\begin{eqnarray*}
		\dfrac{1}{2} [ c, c ](e_1,e_2,e_3) &=& [m_{BB}^A + m_{BA}^A + m_{AB}^A ,m_{BB}^A + m_{BA}^A + m_{AB}^A ] (e_1,e_2,e_3)\\
								 &=& \underbrace{- m_{BA}^A (b_1,m_{BB}^A (b_2,b_3)) + m_{AB}^A (m_{BB}^A (b_1,b_2),b_3)}_{\bigtriangledown_2} \underbrace{- m_{BA}^A (b_1,m_{BA}^A (b_2,a_3)) }_{\Box_3}\\
								 && \underbrace{- m_{BA}^A (b_1,m_{AB}^A (a_2,b_3))}_{\boxplus_1} \underbrace{+ m_{AB}^A (m_{BA}^A (b_1,a_2),b_3)}_{\boxminus_3} \underbrace{+ m_{AB}^A (m_{BA}^A (b_1,a_2),b_3)}_{\boxplus_2}.
	\end{eqnarray*}
	Then, we regroup the term marked by $\bigtriangledown, \Box, \boxminus, \ast, \dagger, \bigstar$ and $ \boxplus$ with the term marked by the same in proposition \ref{propas}, leading to the result.	
\end{proof}

\begin{corollaire}\label{corollairecocycle}
	$$
		Z^2_{nab}(\B,\A) \simeq MC(L)
	$$
\end{corollaire}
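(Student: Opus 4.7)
The plan is to produce an explicit bijection $\Phi : Z^2_{nab}(\B,\A) \to MC(L)$ and then deduce its well-definedness and invertibility by chaining Proposition \ref{propas} and Lemma \ref{lemmeMC} componentwise.

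\medskip

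First, I would define the map on the level of sets. Given a non-abelian cocycle $(\varphi,\psi,\chi)$, set
\[
	\Phi(\varphi,\psi,\chi) \;:=\; c, \qquad c := \chi + \varphi + \psi,
\]
viewed as the element of $L^1$ whose restriction to $\B\otimes\B$ is $\chi$, to $\B\otimes\A$ is $\varphi$, and to $\A\otimes\B$ is $\psi$. Note that the definition of $L^{m,n}$ requires $n\geq 1$, so the $\A\otimes\A$ slot is automatically absent and there is no additional datum to choose. Under the identifications made in Section 2.1 this is precisely $c = m_{BB}^A + m_{BA}^A + m_{AB}^A$, which is the object Lemma \ref{lemmeMC} was tailored to.

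\medskip

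Second, I would verify that $c\in MC(L)$. By Lemma \ref{lemmeMC}, this is equivalent to the vanishing of the sum
\[
	As_{BBB}^A + As_{BBA}^A + As_{BAB}^A + As_{ABB}^A + As_{AAB}^A + As_{ABA}^A + As_{BAA}^A.
\]
The crucial observation is that these seven associator components are supported on pairwise disjoint summands of $(\A\oplus\B)^{\otimes 3}$ (indexed by the pattern of $\A$'s and $\B$'s in the arguments), so their sum vanishes if and only if each vanishes individually. Proposition \ref{propas} then translates each of these seven vanishings into exactly one of the defining cocycle conditions: $As_{BBA}^A$ gives (\ref{defg}), $As_{ABB}^A$ gives (\ref{defd}), $As_{BAB}^A$ gives (\ref{defbi}), $As_{BBB}^A$ gives (\ref{defcocy}), and the combined vanishing of $As_{AAB}^A + As_{ABA}^A + As_{BAA}^A$, together with Remark \ref{remder}, gives (\ref{defder}); the remaining $As_{BBB}^B$ and $As_{AAA}^A$ contributions express only the associativity of $m_\B$ and $m_\A$, which is a standing hypothesis.

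\medskip

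Third, I would construct the inverse $\Psi : MC(L) \to Z^2_{nab}(\B,\A)$ by decomposing $c\in L^1$ along its three possible input types, setting $\chi := c|_{\B\otimes\B}$, $\varphi := c|_{\B\otimes\A}$, $\psi := c|_{\A\otimes\B}$. The same componentwise argument shows that if $c\in MC(L)$ then $(\varphi,\psi,\chi)$ satisfies (\ref{defg})--(\ref{defcocy}). The relations $\Psi\circ\Phi = \mathrm{id}$ and $\Phi\circ\Psi = \mathrm{id}$ are immediate from the definitions.

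\medskip

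The only delicate point is the componentwise linear independence invoked in the second step: one has to make sure that the expansion of $\delta c + \tfrac12[c,c]$ carried out in the proof of Lemma \ref{lemmeMC} really matches the seven associator pieces of Proposition \ref{propas} symbol by symbol, so that no cocycle condition is collapsed or silently doubled. Fortunately the two proofs already tag their contributions with the same decorations ($\Box,\boxminus,\ast,\dagger,\bigstar,\boxplus,\bigtriangledown$), so this matching is essentially bookkeeping and presents no conceptual obstacle.
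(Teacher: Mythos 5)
Your proof is correct and follows essentially the same route as the paper: both reduce the statement to Proposition \ref{propas} combined with Lemma \ref{lemmeMC}, using the disjointness of the associator components in $(\A\oplus\B)^{\otimes 3}$. The only difference is cosmetic --- the paper passes through Proposition \ref{prop1} to identify a cocycle with an associative multiplication on $\A\oplus\B$ whose associator then vanishes, whereas you build the Maurer--Cartan element directly from $(\varphi,\psi,\chi)$ and match the conditions componentwise, which is slightly more self-contained.
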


\begin{proof}
	By proposition \ref{prop1}, a 2-cocycle is the same as an extension $\E$ of $\B$ by $\A$. Then this extension is characterized by its multiplication, which is associative. So its associator vanishes. Since $\A$, $\B$ are associative algebras we have 
	$$
		A_{BBB}^A + A_{BBA}^A +A_{BAB}^A +A_{ABB}^A +A_{AAB}^A +A_{ABA}^A +A_{BAA}^A = 0
	$$
	By lemma \ref{lemmeMC} it is equivalent to $m_{BB}^A + m_{BA}^A + m_{AB}^A \in MC(L)$. 
\end{proof}

\subsection{Non-abelian cohomology as Deligne groupoid}
\begin{theorem}\label{thmprinc}
	$$
		H^2_{nab}(\B,\A) \simeq \mathcal{MC}(L)
	$$
\end{theorem}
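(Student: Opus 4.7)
The plan is to leverage Corollary \ref{corollairecocycle}, which already supplies a bijection $\Phi : Z^2_{nab}(\B,\A) \to MC(L)$ sending $(\varphi, \psi, \chi)$ to $c = m^A_{BB} + m^A_{BA} + m^A_{AB}$, and to promote it to the quotients by showing that the cocycle equivalence of Definition \ref{defcoho} is carried by $\Phi$ onto the gauge equivalence on $MC(L)$. The first key observation is that, in the shifted grading of $L$, one has $L^0 = L^{0,1} = Lin(\B, \A)$, so a single linear map $\beta : \B \to \A$ parametrises an equivalence on both sides.

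First I would verify ad-nilpotency of any $\beta \in L^0$. Since $\beta$ takes values in $\A$ and every element $f$ of $L$ also takes values in $\A$ (and not in $\B$), the composition $\beta \circ f$ vanishes, hence $[\beta, f] = -f \circ \beta$. This substitution replaces a $\B$-input of $f$ by the $\A$-valued output of $\beta$, strictly decreasing the total number of $\B$-inputs; iterating, $(ad_\beta)^{k} f = 0$ once $k$ exceeds that number. In particular, both $c$ and $\delta \beta$ have $\B$-count at most $2$, so $(ad_\beta)^2$ annihilates them and the two series truncate after one step:
\[ c' \;=\; \exp(ad_\beta) c + g_\beta \;=\; c + [\beta, c] - \delta\beta - \tfrac{1}{2}[\beta, \delta\beta]. \]

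The next step is to expand each summand via the Gerstenhaber bracket and the Hochschild differential (using the direct-sum multiplication on $\A \oplus \B$, which is the structure making $\A$ into the $\A \oplus \B$-bimodule invoked in the definition of $L$), and then to project onto the three nontrivial components $m^A_{BB}$, $m^A_{BA}$, $m^A_{AB}$ of $c'$, which under $\Phi$ correspond to $\chi'$, $\varphi'$, $\psi'$. A direct calculation shows that $[\beta, c] = -c \circ \beta$ produces the corrections $-\varphi_{b_1}(\beta(b_2))$ and $-\psi_{b_2}(\beta(b_1))$ in $\chi'$; that $-\delta\beta$ produces the coboundary $\beta(b_1 \cdot b_2)$ in $\chi'$ together with the $\A$-bilinear pieces $-\beta(b_1)\cdot a_2$ and $-a_1\cdot \beta(b_2)$ in $\varphi'$ and $\psi'$; and finally that $-\tfrac{1}{2}[\beta, \delta\beta] = \tfrac{1}{2}(\delta\beta \circ \beta)$ yields the quadratic term $\beta(b_1)\cdot\beta(b_2)$ in $\chi'$, the factor $\tfrac{1}{2}$ compensating for the two equal contributions $\delta\beta\circ_1\beta = \delta\beta\circ_2\beta = \beta(b_1)\beta(b_2)$. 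Collecting these pieces reproduces the transformation laws (\ref{def1}), (\ref{def2}), (\ref{def3}) (up to the obvious symmetry $\beta \leftrightarrow -\beta$, which does not affect the equivalence relation).

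The main obstacle is the sign- and position-bookkeeping in the Gerstenhaber bracket, especially ensuring that the quadratic term $\beta(b_1)\beta(b_2)$ arises with the correct coefficient $1$ from the single nontrivial bracket $[\beta, \delta\beta]$, and that the three linear contributions split correctly across the $(\A,\B)$, $(\B,\A)$ and $(\B,\B)$ components. Once this term-by-term identification is in place, $\Phi$ descends to equivalence classes and yields the desired bijection $H^2_{nab}(\B,\A) \simeq \mathcal{MC}(L)$.
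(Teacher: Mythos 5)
Your proposal is correct and follows essentially the same route as the paper: it combines Corollary \ref{corollairecocycle} with an explicit computation of the gauge action of $\beta \in L^0 = Lin(\B,\A)$, identifying $[\beta,c]$, $-\delta\beta$ and $-\tfrac{1}{2}[\beta,\delta\beta]$ with the three pieces of the transformation laws (\ref{def1})--(\ref{def3}), including the correct truncation of both series after one step. Your justification of ad-nilpotency via the count of $\B$-inputs is in fact slightly cleaner than the paper's.
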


\begin{proof}
	In corollary (\ref{corollairecocycle}) we have already seen that $Z^2_{nab}(\B,\A) \simeq MC(L)$. Then we must show that equivalence relation on 2-cocycles coincides with gauge relation on $MC(L)$(which is a dgL-algebra). One recall that two elements $l$ and $l'$ in $MC(L)$ are equivalent if there exists $\beta \in Lin(\B,\A)$ such that 
	$$
		l' = exp(ad_{\beta})l+g_{\beta}
	$$
	with
	$$
		g_{\beta} := - \sum_{n \in \mathbb{N}} \dfrac{1}{(n+1)!}(ad_{\beta})^n \delta \beta.
	$$
	Then we consider $l := \chi + \varphi + \psi$ and $e_i = a_i + b_i \in \A \oplus \B$. One computes,
	\begin{eqnarray*}
		exp(ad_{\beta})(\chi + \varphi + \psi)(e_1,e_2) &=& (\chi + \varphi + \psi + [\beta,\chi + \varphi + \psi] + \underbrace{\dfrac{1}{2}[\beta, [\beta ,\chi + \varphi + \psi]], ...}_{0})(e_1,e_2)\\
											     &=& \chi(e_1,e_2) + \varphi(e_1,e_2) + \psi(e_1,e_2) + [\beta,\chi + \varphi + \psi](e_1,e_2).
	\end{eqnarray*}
	But one has $[\beta,\chi]= 0$ since $\beta$ and $\chi$ both take values in $\A$. And,
	\begin{eqnarray*}
		[\beta, \varphi ](e_1,e_2) &=& -\cancel{\beta(\varphi(e_1,e_2))} - \cancel{\varphi(\beta(e_1),e_2)} - \varphi(e_1,\beta(e_2))\\
						      &=& - \varphi_{b_1}(\beta(b_2)).
	\end{eqnarray*}
	Similarly one has $[\beta, \psi](e_1,e_2) = - \psi_{b_2}(\beta(b_1))$.
	Hence we have :
	$$
		exp(ad_{\beta})(\chi + \varphi + \psi)(e_1,e_2) = \chi(b_1,b_2) + \varphi_{b_1}(a_2) + \psi_{b_2}(a_1) - \varphi_{b_1}(\beta(b_2))- \psi_{b_2}(\beta(b_1)). 
	$$
	Now, one computes $g_{\beta}$:
	$$
		-\delta \beta(e_1,e_2) = - [m_{\A} + m_{\B}, \beta ](e_1,e_2),
	$$
	with 
	$$
		 [m_{\A} ,\beta ](e_1,e_2) =  \beta(b_1)\cdot a_2+ a_1\cdot \beta(b_2) - \cancel{\beta(a_1 \cdot a_2)},
	$$
	and 
	$$
		[m_{\B} ,\beta ](e_1,e_2) =  \cancel{\beta(b_1)\cdot b_2} + \cancel{b_1\cdot \beta(b_2)}- \beta(b_1\cdot b_2).
	$$
	Therefore the following holds :
	$$
		-\delta \beta(e_1,e_2) = - \beta(b_1)\cdot a_2 - a_1 \cdot \beta(b_2) + \beta(b_1 \cdot b_2).
	$$
	Next we have 
	$$
		- [\beta,\delta \beta](e_1,e_2) = -[\beta ,  m_{\A}(\beta,\cdot) + m_{\A}(\cdot,\beta) - \beta(m_{\B})](e_1,e_2),
	$$
	but
	$$
		-[\beta ,  m_{\A}(\beta,\cdot) ] (e_1,e_2) =  - \beta(\cancel{(\beta(b_1) \cdot a_2)}) +  \cancel{\beta(\beta(b_1))}\cdot a_2+  m_{\A}(\beta(b_1),\beta(a_2)),
	$$
		$$
		-[\beta ,  m_{\A}(\cdot,\beta)  ] (e_1,e_2) =  - \beta(\cancel{a_1\cdot \beta(b_2)}) +  \beta(b_1)\cdot \beta(b_2)+ \beta(b_1) \cdot \cancel{\beta(\beta(a_2))},
	$$
	and $ [\beta , \beta(m_{\B})] = 0 $ since $\beta$ takes values in $\A$. We have therefore computed
	$$
		  -[\beta,\delta \beta](e_1,e_2) = 2 \beta(b_1)\cdot \beta(b_2).
	$$
	Now when $n\geq2$, since $\beta$ takes values in $\A$, $(ad_{\beta})^n = 0$, and we have :
	$$
		g_{\beta}(e_1,e_2) =  - \beta(b_1)\cdot a_2 - a_1\cdot \beta(b_2) + \beta(b_1\cdot b_2) + \beta(b_1)\cdot \beta(b_2). 
	$$
	Therefore, one can say $l'$ is equivalent to $l$ in $MC(L)$ if :
	\begin{eqnarray*}
		l'(e_1,e_2) &=& l(e_1,e_2) - \varphi_{b_1}(\beta(b_2))- \psi_{b_2}(\beta(b_1))  - \beta(b_1)\cdot a_2 - a_1\cdot \beta(b_2) \\
					&&+ \beta(b_1 \cdot b_2) + \beta(b_1)\cdot \beta(b_2).\\	
	\end{eqnarray*}
	In other words,
	\begin{eqnarray*}
		(\chi' + \varphi' + \psi') (e_1,e_2) &=& \varphi_{b_1}(a_2)  - \beta(b_1)\cdot a_2 + \psi_{b_2}(a_1) - a_1 \cdot \beta(b_2)\\
								  && + \chi(b_1,b_2) -  \varphi_{b_1}(\beta(b_2))- \psi_{b_2}(\beta(b_1))+ \beta(b_1 \cdot b_2) + \beta(b_1)\cdot \beta(b_2).
	\end{eqnarray*}
	On the one hand, when the two cocycles $(\chi , \varphi , \psi)$ and $(\chi' , \varphi' , \psi')$ are equivalent, equations (\ref{def1}), (\ref{def2}), and (\ref{def3}) are satisfied and so the previous equation is satisfied too. Hence $l$ and $l'$ are equivalent in $MC(L)$.\\
	On the other hand, when $l$ and $l'$ are equivalent, the previous equation is satisfied then equations (\ref{def1}), (\ref{def2}) and (\ref{def3}) also. Consequently the cocycles $(\chi , \varphi , \psi)$ and $(\chi' , \varphi' , \psi')$ are equivalent.
\end{proof}

\section{Link with abelian extensions}

\begin{definition}
	Let $\B$ be an associative algebra and $A$ a $\B$-bimodule. An abelian extension $\E$ of $\B$ by $A$ is as short exact sequence
	$$\begin{CD}
		0 @>>> A @>>> \E @>>> \B @>>> 0.
	\end{CD}$$
\end{definition}

Again, we can consider an equivalence relation on these extensions.

\begin{definition}
	Let $\E$ and $\E'$ two abelian extension of $\B$ by $A$. They are equivalent if there exists $\theta : \E \rightarrow \E' $ such that the following diagram commutes 
	$$\begin{CD}
		0 @>>> A @>>> \E @>>> \B @>>> 0\\
		@.          @|        @VV\theta V @|\\
		0 @>>> A @>>> \E' @>>> \B @>>> 0.
	\end{CD}$$
\end{definition}

\begin{proposition}
	There is a 1-1 correspondence between classes of abelian extension of $\B$ by $A$ and $C^2_{Hoch}(\B,A)$.
\end{proposition}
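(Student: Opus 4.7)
The plan is to recognise an abelian extension as a particular instance of a non-abelian extension and then specialise Proposition \ref{prop1} (or equivalently Theorem \ref{thmprinc}). Presumably $C^2_{Hoch}(\B,A)$ is a typo for $H^2_{Hoch}(\B,A)$, since classes of extensions should pair with a cohomology group; I will argue for the latter.

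First I would view the $\B$-bimodule $A$ as an associative algebra by equipping it with the zero multiplication $m_A = 0$. Then every abelian extension $0 \to A \to \E \to \B \to 0$ is in particular a non-abelian extension in the sense of the first definition, and conversely any non-abelian extension of $\B$ by the zero algebra $A$ is abelian. Equivalence of abelian extensions matches equivalence of non-abelian extensions, so it suffices to compare the associated cocycle data.

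Next I would use Proposition \ref{prop1} and restrict its cocycle description to this setting. Choosing a section $s$, the maps $\varphi^s_b(a) = s(b)\cdot a$ and $\psi^s_b(a) = a\cdot s(b)$ recover the prescribed bimodule structure of $A$, hence are \emph{fixed} (independent of the choice of section) once we insist that the extension realises the given bimodule structure. With $m_A=0$, relations (\ref{defg}), (\ref{defd}), (\ref{defbi}) degenerate into the bimodule axioms, (\ref{defder}) is automatic because $Der(A) = End(A)$, and the remaining condition (\ref{defcocy}) is precisely the Hochschild cocycle equation $\delta \chi = 0$ for $\chi \in C^2(\B,A)$. So $Z^2_{nab}(\B,A)$ reduces to $Z^2_{Hoch}(\B,A)$.

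Finally I would specialise the equivalence relation. Given $\beta : \B \to A$, equations (\ref{def1}) and (\ref{def2}) become $\varphi = \varphi'$ and $\psi = \psi'$ (using $\beta(b)\cdot a = 0 = a \cdot \beta(b)$ in $A$), which is consistent with the bimodule structure being fixed; equation (\ref{def3}) simplifies, using $\beta(b_1)\cdot \beta(b_2)=0$, to $\chi' - \chi = \varphi_{b_1}\beta(b_2) + \psi_{b_2}\beta(b_1) - \beta(b_1 b_2)$, which is (up to sign convention) the Hochschild coboundary $\delta\beta$. Thus the equivalence relation on $Z^2_{nab}(\B,A)$ restricts to the Hochschild coboundary relation, giving the bijection between classes of abelian extensions and $H^2_{Hoch}(\B,A)$. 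The only genuinely delicate point is checking that the forgetful map from abelian extensions to non-abelian extensions is injective on equivalence classes, i.e.\ that one cannot equivalence an abelian extension to a non-abelian one; this follows because the prescribed bimodule structure on $A$ is preserved by any equivalence $\theta$, so the image of $A$ inherits zero multiplication in both extensions.
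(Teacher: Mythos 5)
Your proof is correct and follows essentially the same route as the paper: specialise the non-abelian cocycle description of Proposition \ref{prop1} to the case $m_A=0$, observe that equations (\ref{defg})--(\ref{defbi}) reduce to the bimodule axioms, (\ref{defcocy}) to the Hochschild cocycle condition, and (\ref{def1})--(\ref{def3}) to the coboundary relation. Your reading of $C^2_{Hoch}$ as $H^2_{Hoch}$ matches what the paper's own proof actually establishes, and your extra remark on the injectivity of the forgetful map is a detail the paper leaves implicit.
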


\begin{proof}
	This can be view as a special case of section 1.2. Indeed, we fixe $\varphi := m_{BA}^A $, $\psi := m_{AB}^A$, $\chi := m_{\B\B}^A$ and $m_{\A} = 0$.\\ In this case, equations $(\ref{defg})$, $(\ref{defd})$ and $(\ref{defbi})$ are satisfied since $A$ is a $\B$-bimodule,  equation $(\ref{defcocy})$ means that $\chi$ is a Hochschild 2-cocycle. Equations $(\ref{def1})$ and $(\ref{def2})$ become
	$$
		\varphi_{b}(a) = \varphi_b'(a) 
	$$
	and
	$$
		\psi_{b}(a) = \psi_{b}'(a).
	$$
	These equations mean that the bimodule structure does not change through equivalence relation. And finally, equation $(\ref{def3})$ just mean that $\chi$ and $\chi'$ differ by a Hochschild coboundary hence they are in the same cohomology class.
\end{proof}

\nocite{*}
\bibliographystyle{plain}
\bibliography{biblio}

\end{document}